\documentclass[12pt,a4paper]{article}
\usepackage{ejorsj-s2}
\usepackage{amsmath,amssymb,amsfonts,amsthm,url,booktabs}
\usepackage{enumerate}
\usepackage{graphicx}
\usepackage[lined,linesnumbered,boxed]{algorithm2e}
\usepackage{multirow}
\eqswitchon 
\newtheorem{theorem}{Theorem}[section]

\newtheorem{proposition}[theorem]{Proposition}

\newcommand{\LOP}{{\rm LOP}}

\newcommand{\interior}[1]{%
  {\kern0pt#1}^{\mathrm{o}}%
}
\DeclareMathOperator{\order}{O}
\DeclareMathOperator{\ins}{insert}
\SetKwProg{Fn}{Function}{}{end}
\SetKwInput{KwIn}{Input}
\SetKwInput{KwOut}{Output}
\SetKwRepeat{Do}{do}{while}
\SetAlCapSkip{0.5em}
\SetAlCapSty{}
\title{DECOMPOSITION-BASED CONSTRUCTIVE HEURISTICS FOR THE LINEAR ORDERING PROBLEM}
\author{
\begin{tabular}[h]{ccc}  
Kazutoshi Ando& Tatsuya Sugimoto& Noriyoshi Sukegawa \\ % author names
\multicolumn{2}{c}{\textit{Shizuoka University}}
&\multicolumn{1}{c}{\textit{Hosei University}}
\\ % affiliations
\end{tabular}
}
\date{June 23, 2026}
\begin{document}
\maketitle
% \PACS{PACS code1 \and PACS code2 \and more}
%\subjclass[2020]{Primary 90C27; Secondary 90C59.}
\begin{abstract}
The linear ordering problem (LOP) is a classical NP-hard combinatorial optimization problem.
In this paper, we study constructive heuristics for the LOP.
We propose a unified recursive framework in which the index set is heuristically partitioned into smaller subsets, sufficiently small subproblems are solved exactly, and the resulting permutations are concatenated.
Within this framework, we investigate three different recursive partitioning principles:
a rule based on strongly connected components (the level graph method),
a cut-based rule (the minimum cut method),
and a score-based rule (the recursive Borda method). 
We first show that the recursive Borda method satisfies the Condorcet criterion, whereas the Borda rule does not.
The level graph method and the minimum cut method satisfy a stronger Condorcet-type property, which the recursive Borda method does not satisfy.
Computational experiments on xLOLIB instances show that the recursive Borda method provides the best average solution quality among the tested constructive heuristics. 
They also show that, after applying insertion-based local search, the differences in final solution quality among these heuristics become much smaller. 
\end{abstract}
%-------- keywords (2 to 6 keywords) ------- 
\keyword{linear ordering problem, constructive heuristic, Borda score, Condorcet criterion}
%%%%%%%%%%%%%%%%%%%%%%%%%%%%%%%%%%%%%%%%%%%%%%%%
%%%%%%%%%%%%%%%%%%%%%%%%%%%%%%%%%%%%%%%%%%%%%%%%%%%%%%%%%%%%%%%%%%%%%%%%%%%%%%%%%%%%%%%%%%%%%%%%%%%%
\section{Introduction}\label{sec:introduction}
%%%%%%%%%%%%%%%%%%%%%%%%%%%%%%%%%%%%%%%%%%%%%%%%%%%%%%%%%%%%%%%%%%%%%%%%%%%%%%%%%%%%%%%%%%%%%%%%%%%%
Let $V$ be a finite index set, and let $n=|V|$.
The {\em linear ordering problem} (LOP) is the problem of finding, for a given nonnegative matrix $C=(c_{uv})_{u,v\in V}$, a permutation $\sigma:\{1,\dots,n\}\rightarrow V$ that maximizes
\begin{align}
  \label{equ:lop}
  f(\sigma) =
  \sum_{i=1}^{n-1}\sum_{j=i+1}^{n} c_{\sigma(i)\sigma(j)} .
\end{align}
The LOP is a classical NP-hard combinatorial optimization problem
and has been studied in various contexts, including the triangulation of input-output tables,
one-machine scheduling with precedence constraints, 
and aggregation of individual preferences;
see, e.g., Mart{\'i} and Reinelt~\cite{Marti} and Sakuraba and Yagiura~\cite{Sakuraba}.
Because of its computational difficulty, a variety of exact and heuristic approaches have been proposed, such as integer programming methods, branch-and-bound algorithms, local search methods, and metaheuristics~\cite{Marti}.
\par
Constructive heuristics provide a basic approach to the LOP, since they construct feasible orderings without relying on iterative improvement.
Such methods can be used either as standalone heuristic procedures or as initial-solution generators for improvement procedures such as local search. 
Classical constructive heuristics such as Becker's algorithm~\cite{Becker1967}
and the Borda rule~\cite{Borda1781,Fishburn1976}
are based on aggregate scores that summarize the tendency of each index to precede the others.
However, some instances may contain coarse ordering information, such as groups of indices that should be placed before other groups.
This leads us to consider decomposition-based constructive heuristics, in which the index set is first partitioned into such groups and the elements within each group are then ordered recursively.
\par
In this paper, we study constructive heuristics based on recursive partitioning of the index set.
The basic idea is to divide the original problem into smaller subproblems, solve sufficiently small subproblems exactly, and concatenate the resulting permutations.
Within this heuristic framework, we consider three partitioning principles.
\par 
The first method is the {\em level graph method}.
It constructs threshold graphs from the matrix entries and recursively partitions the index set using strongly connected component decompositions of these graphs.
The second method is the {\em minimum cut method}, which partitions the index set by minimizing the total weight of arcs directed backward with respect to the resulting two-block ordering.
The third method is the {\em recursive Borda method}, which uses Borda scores to separate indices that tend to precede other indices from those that tend to follow them.
Although these three methods are based on different partitioning principles, they share the same recursive structure. 
We also establish Condorcet-type properties of the proposed methods.
The recursive Borda method satisfies the Condorcet criterion, whereas the Borda rule does not.
The level graph method and the minimum cut method satisfy a stronger Condorcet-type property, which the recursive Borda method does not satisfy.
\par
We evaluate the proposed methods through computational experiments on benchmark instances from xLOLIB~\cite{xLOLIB}.
We first compare the constructive heuristics as standalone methods in terms of solution quality and computation time.
We then examine how different constructive heuristics behave when their solutions are used as initial solutions for insertion-based local search.
\par
The main contributions of this paper are summarized as follows.
\begin{itemize}
\item We propose a unified recursive framework for constructing solutions to the LOP and introduce three recursive partitioning principles within this framework: the level graph method, the minimum cut method, and the recursive Borda method.
\item We establish Condorcet-type properties of the proposed recursive methods. The level graph method and the minimum cut method satisfy a stronger Condorcet-type property, while the recursive Borda method satisfies the Condorcet criterion.
\item We conduct computational experiments on xLOLIB benchmark instances to compare several constructive heuristics, including the proposed methods, in terms of solution quality and computation time. We also examine how these heuristics behave when their solutions are used as initial solutions for local search.
\end{itemize}

The remainder of this paper is organized as follows.
In Section~2, we review existing constructive heuristics and local search methods for the LOP.
In Section~3, we describe the proposed recursive partitioning heuristics and their Condorcet-type properties.
In Section~4, we present the results of computational experiments.
Finally, Section~5 gives concluding remarks. 
%%%%%%%%%%%%%%%%%%%%%%%%%%%%%%%%%%%%%%%%%%%%%%%%%%%%%%%%%%%%%%%%%%%%%%%%%%%%%%%%%%%%%%%%%%%%%%%%%%%%
\section{Preliminaries and Existing Heuristics}\label{sec:preliminaries}
%%%%%%%%%%%%%%%%%%%%%%%%%%%%%%%%%%%%%%%%%%%%%%%%%%%%%%%%%%%%%%%%%%%%%%%%%%%%%%%%%%%%%%%%%%%%%%%%%%%%
In this section, we review baseline constructive heuristics and the insertion-neighborhood local search used in our computational experiments.
%%%%%%%%%%%%%%%%%%%%%%%%%%%%%%%%%%%%%%%%%%%%%%%%%%%%%%%%%%%%%%%%%%%%%%%%%%%%%%%%%%%%%%%%%%%%%%%%%%%%
\subsection{Constructive Heuristics}
\label{subsec:constructive}
The constructive heuristics reviewed in this subsection generate a permutation using aggregate score-based rules.
We first recall Becker's algorithm~\cite{Becker1967}.
For a nonnegative matrix $C=(c_{uv})_{u,v\in V}$, define the score $q(u)$ by
\begin{align}
  \label{equ:cost_q}
  q(u)=
  \frac{\sum\{c_{uv}\mid v\in V,\ v\neq u\}}
       {\sum\{c_{vu}\mid v\in V,\ v\neq u\}}
\end{align}
for each $u\in V$.
Becker's algorithm starts with an empty sequence and repeatedly selects an index $u$ that maximizes the score $q(u)$ and appends it to the sequence.
The selected index and the corresponding row and column are then removed from the matrix, and the scores are recomputed for the remaining indices.
This procedure is repeated until all indices have been selected, and the resulting sequence defines a permutation.
Thus, Becker's algorithm is a sequential score-based construction rule.

The pseudocode of Becker's algorithm is shown in Algorithm~\ref{alg:bec}.
Throughout this paper, ties are broken arbitrarily unless otherwise stated.
\IncMargin{0.5em}
\begin{algorithm}
  \caption{Becker's algorithm.}
  \label{alg:bec}
  \KwIn{Nonnegative matrix $C=(c_{uv})_{u,v\in V}$.}
  \KwOut{Permutation $\sigma:\{1,\dots,n\}\rightarrow V$.}
  \For{$i\leftarrow 1$ {\rm to} $n$}{
    Compute $q(u)$ for all $u\in V$\;
    $u^{*}\leftarrow \arg\max\{q(u)\mid u\in V\}$\;
    $\sigma(i)\leftarrow u^{*}$\;
    Delete the $u^{*}$-th row and the $u^{*}$-th column from $C$\;
    $V\leftarrow V\setminus\{u^{*}\}$\;
  }
  \Return $\sigma$\;
\end{algorithm}
\DecMargin{0.5em}

In implementing Becker's algorithm, special care is required when the denominator of $q(u)$ is zero.
Let
\begin{align}
  p(u)=\sum\{c_{uv}\mid v\in V,\ v\neq u\},
  \quad
  r(u)=\sum\{c_{vu}\mid v\in V,\ v\neq u\}.
\end{align}
We compare $p(u)/r(u)$ and $p(k)/r(k)$ as ordinary ratios when both denominators are positive.
If $r(u)=0$ and $p(u)>0$, then $p(u)/r(u)$ is regarded as larger than any ratio with positive denominator.
If both denominators are zero and both numerators are positive, we compare the numerators.
If $p(u)=r(u)=0$, the index $u$ does not affect the objective value with respect to the remaining indices, and it may be placed at any position.
In our implementation, such an index is placed as early as possible.

A static variant of Becker's algorithm computes $q(u)$ only once at the beginning and outputs the indices in nonincreasing order of $q(u)$.
Although we do not include this static Becker variant in the main computational comparison in Section~\ref{sec:experiments}, it is useful as a conceptual reference point for distinguishing static and sequential score-based construction rules.

Another classical score-based heuristic is the Borda rule~\cite{Borda1781,Fishburn1976}.
For $C=(c_{uv})_{u,v\in V}$, the {\em Borda score} $\beta(u)$ of an index $u$ is defined by
\begin{align}
  \beta(u)
  =
  \sum\{c_{uv}\mid v\in V\}
  -
  \sum\{c_{vu}\mid v\in V\}
\end{align}
for each $u\in V$. This score represents the net preference flow of each index, reflecting how strongly the index dominates other indices in aggregate, and is commonly used in ranking problems. 
The Borda rule is a static ordering rule that outputs the indices in nonincreasing order of $\beta(u)$. 

For conceptual clarity, we also describe a sequential variant of the Borda rule.
This variant starts with an empty sequence and repeatedly selects an index $u$ with maximum Borda score $\beta(u)$ and appends it to the sequence.
The selected index and the corresponding row and column are then removed from the matrix, and the Borda scores are recomputed for the remaining indices.
This procedure is repeated until all indices have been selected, and the resulting sequence defines a permutation.
This variant is not included in the main computational comparison in Section~\ref{sec:experiments}.
The pseudocode of this variant is shown in Algorithm~\ref{alg:borda_v}.

A limitation of these Borda-type rules, in relation to the Condorcet criterion, will be discussed in Section~\ref{subsec:condorcet}.
\IncMargin{0.5em}
\begin{algorithm}[t]
  \caption{Sequential variant of the Borda rule.}
  \label{alg:borda_v}
  \KwIn{Nonnegative matrix $C=(c_{uv})_{u,v\in V}$.}
  \KwOut{Permutation $\sigma:\{1,\dots,n\}\rightarrow V$.}
  \For{$i\leftarrow 1$ {\rm to} $n$}{
    Compute $\beta(u)$ for all $u\in V$\;
    $u^{*}\leftarrow \arg\max\{\beta(u)\mid u\in V\}$\;
    $\sigma(i)\leftarrow u^{*}$\;
    Delete the $u^{*}$-th row and the $u^{*}$-th column from $C$\;
    $V\leftarrow V\setminus\{u^{*}\}$\;
  }
  \Return $\sigma$\;
\end{algorithm}
\DecMargin{0.5em}
\subsection{Local Search}
\label{subsec:local-search}
Local search improves a given permutation by repeatedly replacing it with a better neighboring permutation.
Let $N(\sigma)$ denote the neighborhood of a permutation $\sigma$.
Starting from an initial permutation $\sigma_{0}$, a local search method repeatedly chooses a permutation $\sigma'\in N(\sigma)$ with the largest objective value and replaces $\sigma$ with $\sigma'$ if $f(\sigma')>f(\sigma)$.
The method terminates when
\begin{align}
  \label{equ:ls1}
  f(\sigma)\geq f(\sigma')
  \quad
  \mbox{for all } \sigma'\in N(\sigma).
\end{align}
A permutation satisfying Equation~(\ref{equ:ls1}) is called {\em locally optimal} with respect to $N$.

The basic local search procedure is shown in Algorithm~\ref{alg:ls}.

\IncMargin{0.5em}
\begin{algorithm}[t]
  \caption{Basic local search.}
  \label{alg:ls}
  \KwIn{Nonnegative matrix $C=(c_{uv})_{u,v\in V}$ and initial permutation $\sigma_{0}:\{1,\dots,n\}\rightarrow V$.}
  \KwOut{Locally optimal permutation $\sigma:\{1,\dots,n\}\rightarrow V$ with respect to $N$.}
  $\sigma\leftarrow\sigma_{0}$\;
  \Repeat{no improving neighbor exists}{
    Select $\sigma'\in N(\sigma)$ maximizing $f(\sigma')$\;
    \If{$f(\sigma')>f(\sigma)$}{
      $\sigma\leftarrow\sigma'$\;
    }
  }
  \Return $\sigma$\;
\end{algorithm}
\DecMargin{0.5em}

In this paper, we use the insertion neighborhood $N_I(\sigma)$ as $N(\sigma)$. 
For a permutation $\sigma$ and positions $i \neq j$, the insertion operation removes the element at position $i$ and inserts it into position $j$, shifting the intermediate elements accordingly.
Let $\ins(\sigma,i,j)$ denote the permutation obtained by applying this operation to $\sigma$.
The {\em insertion neighborhood} of $\sigma$ is defined as
\begin{align}
  N_{I}(\sigma)
  =
  \{\ins(\sigma,i,j)\mid i,j=1,\dots,n,\ i\neq j\}.
\end{align}
The change in the objective value can be evaluated efficiently using a standard incremental formula, so that a best improving insertion move can be found in $\order(n^{2})$ time (see, e.g.,~\cite{Marti, LOPRevisited}).
%%%%%%%%%%%%%%%%%%%%%%%%%%%%%%%%%%%%%%%%%%%%%%%%%%%%%%%%%%%%%%%%%%%%%%%%%%%%%%%%%%%%%%%%%%%%%%%%%
\section{Decomposition-Based Constructive Heuristics}\label{sec:heuristics}
%%%%%%%%%%%%%%%%%%%%%%%%%%%%%%%%%%%%%%%%%%%%%%%%%%%%%%%%%%%%%%%%%%%%%%%%%%%%%%%%%%%%%%%%%%%%%%%%%
This section presents three decomposition-based constructive heuristics for the LOP.
These methods share a common recursive framework and differ only in the rule used to partition the current index set.

For a matrix $C=(c_{uv})_{u,v\in V}$, we denote by LOP$(C)$ the linear ordering problem with input matrix $C$.
For a subset $Z \subseteq V$, let $C[Z]$ denote the submatrix of $C$ whose rows and columns are indexed by $Z$.

We first describe the recursive framework common to all the constructive heuristics considered in this section.
The framework recursively constructs a permutation of a current index set $Z\subseteq V$ as follows.
If $|Z|\leq{\tt MAXSIZE}$, the subproblem LOP$(C[Z])$ is solved exactly.
Otherwise, the set $Z$ is partitioned into smaller nonempty subsets $Z_1,\dots,Z_k$ according to a prescribed rule.
The procedure is then applied recursively to each subset $Z_i$, and the resulting permutations are concatenated.
The general procedure is summarized in Algorithm~\ref{alg:general-recursive}.

%\paragraph{\bf Preprocessing convention.}
Before describing the individual partitioning rules, we introduce a preprocessing convention.
For any $(u,v)\in V\times V$, we have
\begin{align}
  c_{uv}
  =
  \max\{c_{uv}-c_{vu},0\}
  +
  \min\{c_{uv},c_{vu}\}.
\end{align}
Therefore, the objective function in Equation~(\ref{equ:lop}) can be rewritten as
\begin{align}
  \label{equ:lop-re}
  f(\sigma)
  =
  \sum_{i=1}^{n-1}\sum_{j=i+1}^{n}
  \max\{c_{\sigma(i)\sigma(j)}-c_{\sigma(j)\sigma(i)},0\}
  +
  \sum_{i=1}^{n-1}\sum_{j=i+1}^{n}
  \min\{c_{\sigma(i)\sigma(j)},c_{\sigma(j)\sigma(i)}\}.
\end{align}
The second term on the right-hand side of Equation~(\ref{equ:lop-re}) is independent of the permutation $\sigma$.
Thus, if $C'=(c'_{uv})_{u,v\in V}$ is defined by
\begin{align}
  \label{equ:c-prime}
  c'_{uv}
  =
  \max\{c_{uv}-c_{vu},0\}
  \quad (u,v\in V),
\end{align}
then LOP($C$) and LOP($C'$) are equivalent.
Hence, in this section, by replacing $C$ with $C'$ when necessary, we assume without loss of generality that
\begin{align}
  \label{equ:asym}
  c_{uv}=0
  \quad \mbox{or} \quad
  c_{vu}=0
  \quad (u,v\in V).
\end{align}

\IncMargin{0.5em}
\begin{algorithm}[t]
  \caption{General recursive constructive heuristic.}
  \label{alg:general-recursive}
  \SetKwFunction{FMain}{GeneralRecursive}
  \Fn{\FMain{$C,V$}}{
    \KwIn{Nonnegative matrix $C=(c_{uv})_{u,v\in V}$ and index set $V$.}
    \KwOut{Permutation $\sigma$ of $V$.}
    \If{$|V|\leq{\tt MAXSIZE}$}{
      \Return an optimal solution of {\rm LOP}$(C)$ obtained by an exact algorithm\;
    }
    \Else{
      $(V_{1},\dots,V_{k})\leftarrow {\tt partition}(C,V)$\;
      \For{$i\leftarrow 1$ {\rm to} $k$}{
        $\sigma^{(i)}\leftarrow \FMain(C[V_i], V_i)$\;
      }
      \Return $(\sigma^{(1)},\dots,\sigma^{(k)})$\;
    }
  }
\end{algorithm}
\DecMargin{0.5em}
%%%%%%%%%%%%%%%%%%%%%%%%%%%%%%%%%%%%%%%%%%%%%%%%%%%%%%%%%%%%%%%%%%%%%%%%%%%%%%%%%%%%%
\subsection{Level Graph Method}
\label{subsec:level}
%%%%%%%%%%%%%%%%%%%%%%%%%%%%%%%%%%%%%%%%%%%%%%%%%%%%%%%%%%%%%%%%%%%%%%%%%%%%%%%%%%%%%
For a nonnegative matrix $C=(c_{uv})_{u,v\in V}$, define the directed graph
$G(C)=(V,A(C))$ 
by
\begin{align}
  A(C)=\{(u,v)\mid u,v\in V,\ c_{uv}>0\}.
\end{align}
This graph is called {\em the strong majority graph} of $C$.

The motivation for the level graph method comes from an exact decomposition property of the LOP based on the strongly connected components of $G(C)$.
Let $\mathcal{P}(C)$ be the partition of $V$ corresponding to the strongly connected components of $G(C)$.
An ordering $(Y_1,\dots,Y_l)$ of $\mathcal{P}(C)$ is called a {\em topological ordering} if
\begin{align}
  \label{equ:isou}
  ((u,v)\in A(C),\ u\in Y_i,\ v\in Y_j,\ i\neq j)
  \Longrightarrow
  i<j.
\end{align}

\begin{theorem}[Strong decomposition theorem~\cite{Hanauer}]
  \label{theo:st_bunkai}
  Let $C$ be an input matrix of the {\rm LOP}, and let
  $(Y_1,\dots,Y_l)$ be a topological ordering of $\mathcal{P}(C)$.
  For each $i=1,\dots,l$, let $C[Y_i]$ denote the restriction of $C$ to $Y_i$.
  If $\sigma^{(i)}:\{1,\dots,|Y_i|\}\rightarrow Y_i$ is an optimal solution of
  {\rm LOP}$(C[Y_i])$, then the concatenated permutation
  \begin{align*}
    \sigma^*=(\sigma^{(1)},\dots,\sigma^{(l)})
  \end{align*}
  is an optimal solution of {\rm LOP}$(C)$.
\end{theorem}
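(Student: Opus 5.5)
The plan is to split the objective of any permutation into contributions from pairs inside one component and pairs across two components. Then I bound each part separately and show that $\sigma^*$ attains both bounds at once.

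For an arbitrary permutation $\sigma$ of $V$, write
\begin{align*}
  f(\sigma)=\sum_{i=1}^{l} W_i(\sigma)+B(\sigma),
\end{align*}
where $W_i(\sigma)$ is the sum of $c_{uv}$ over pairs $u,v\in Y_i$ with $u$ placed before $v$ in $\sigma$. The term $B(\sigma)$ is the same sum over pairs with $u\in Y_i$, $v\in Y_j$, $i\neq j$. Note that $W_i(\sigma)=f_{C[Y_i]}(\sigma|_{Y_i})$, where $\sigma|_{Y_i}$ is the order $\sigma$ induces on $Y_i$. Optimality of $\sigma^{(i)}$ for $\mathrm{LOP}(C[Y_i])$ therefore gives $W_i(\sigma)\le f_{C[Y_i]}(\sigma^{(i)})=W_i(\sigma^*)$. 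The equality holds because $\sigma^*$ induces exactly $\sigma^{(i)}$ on $Y_i$.

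For the cross terms, I use the topological property~(\ref{equ:isou}). If $u\in Y_i$, $v\in Y_j$ with $i\neq j$ and $c_{uv}>0$, then $(u,v)\in A(C)$, so $i<j$. For each unordered cross pair $\{u,v\}$, any $\sigma$ collects at most one of $c_{uv},c_{vu}$, so it collects at most $\max\{c_{uv},c_{vu}\}$. At most one of the two entries is positive: if both were, $u$ and $v$ would lie in the same strongly connected component. Hence $\max\{c_{uv},c_{vu}\}=c_{uv}+c_{vu}$, and this maximum is exactly the entry pointing from the lower-indexed block to the higher one. The concatenation $\sigma^*$ puts every element of $Y_i$ before every element of $Y_j$ when $i<j$, so it collects this maximum for every cross pair. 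Thus $B(\sigma)\le B(\sigma^*)$.

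Adding the two inequalities gives $f(\sigma)\le f(\sigma^*)$ for all $\sigma$, which proves optimality. The only delicate point is the cross-pair step: it needs the observation that a positive entry in both directions forces $u$ and $v$ into the same component. Once that is in place, the rest is bookkeeping of the sum decomposition. The argument does not need the preprocessing assumption~(\ref{equ:asym}), although that assumption would make the cross-pair step immediate.
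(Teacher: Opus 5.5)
Your proof is correct and complete. The paper gives no proof of this theorem; it is quoted from Hanauer, so there is no in-paper argument to compare against. Your argument is the standard one for this result: split $f(\sigma)=\sum_{i}W_i(\sigma)+B(\sigma)$, bound the intra-block terms by the optimality of each $\sigma^{(i)}$, and bound the cross terms by noting that every positive cross entry points forward in the topological order.

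One precision on your closing remark. Dropping (\ref{equ:asym}) works because the paper defines $A(C)$ by $c_{uv}>0$ and because $C$ is nonnegative. Nonnegativity is exactly what gives $\max\{c_{uv},c_{vu}\}=c_{uv}+c_{vu}$ for cross pairs, so this step would fail for a matrix with negative entries.
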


Related structural properties of optimal solutions of the LOP have also been studied in~\cite{AndoSukegawaTakagi}.
Theorem~\ref{theo:st_bunkai} shows that, when the strong majority graph has several strongly connected components, the LOP can be decomposed into independent subproblems without losing optimality.
However, in many instances, $G(C)$ itself is strongly connected or contains a large strongly connected component.
In such cases, the exact decomposition in Theorem~\ref{theo:st_bunkai} does not sufficiently reduce the problem size.

The level graph method is a heuristic extension of this decomposition idea.
It attempts to obtain smaller components by ignoring arcs whose weights are relatively small.
Let the distinct values of $c_{uv}$ $(u,v\in V)$ be
\begin{align}
  0=\gamma_0<\gamma_1<\cdots<\gamma_r.
\end{align}
For $\gamma\geq 0$, {\em the $\gamma$-level graph}
$G_{\gamma}(C)=(V,A_{\gamma}(C))$ 
is defined by
\begin{align}
  \label{equ:level-eda}
  A_{\gamma}(C)
  =
  \{(u,v)\mid u,v\in V,\ c_{uv}>\gamma\}.
\end{align}
When $\gamma=0$, the level graph $G_{\gamma}(C)$ coincides with the strong majority graph $G(C)$.
Thus, the strongly connected component decomposition at level $\gamma=0$ is exactly the decomposition appearing in Theorem~\ref{theo:st_bunkai}.
For $\gamma>0$, some arcs are removed, and the resulting decomposition is no longer guaranteed to preserve optimality.
Nevertheless, it may produce smaller components, and hence it can be used as a heuristic partitioning rule.

The level graph method recursively decomposes the current index set by using strongly connected components of level graphs.
Starting with $\gamma=\gamma_0$, it computes the strongly connected components of $G_{\gamma}(C)$ and orders them topologically.
If a component is larger than ${\tt MAXSIZE}$, the value of $\gamma$ is increased and the same procedure is applied recursively to the submatrix induced by that component.
When all components have size at most ${\tt MAXSIZE}$, the corresponding subproblems are solved exactly and their solutions are concatenated.

The pseudocode is shown in Algorithm~\ref{alg:level}.
Since $\gamma_r$ is the maximum entry value, $G_{\gamma_r}(C)$ has no arcs.
Therefore, all strongly connected components are singletons at this level, and the recursion terminates for ${\tt MAXSIZE}\geq 1$.

\IncMargin{0.5em}
\begin{algorithm}[t]
  \caption{Level graph method.}
  \label{alg:level}
  \SetKwFunction{FMain}{Level}
  \SetKwFunction{FDecom}{Decompose}
  \SetKwProg{Fn}{Function}{}{end}

  \Fn{\FMain{$C,V$}}{
    \KwIn{Nonnegative matrix $C=(c_{uv})_{u,v\in V}$.}
    \KwOut{Permutation of $V$.}
    Let the distinct values of $c_{uv}$ $(u,v\in V)$ be $0=\gamma_0<\gamma_1<\cdots<\gamma_r$\;
    \Return \FDecom{$C,V,0$}\;
  }

  \Fn{\FDecom{$D,Z,t$}}{
    \KwIn{Nonnegative matrix $D=(d_{uv})_{u,v\in Z}$, index set $Z$, and level index $t$.}
    \KwOut{Permutation of $Z$.}
    \If{$|Z|\leq{\tt MAXSIZE}$}{
      \Return an optimal solution of {\rm LOP}$(D)$ obtained by an exact algorithm\;
    }
    \Else{
      Let $(Z_1,\dots,Z_k)$ be a topological ordering of the strongly connected components of $G_{\gamma_t}(D)$\;
      \For{$i\leftarrow 1$ {\rm to} $k$}{
        $\sigma^{(i)} \leftarrow \FDecom(D[Z_i], Z_i, t+1)$\;
      }
      \Return $(\sigma^{(1)},\dots,\sigma^{(k)})$\;
    }
  }
\end{algorithm}
\DecMargin{0.5em}
%%%%%%%%%%%%%%%%%%%%%%%%%%%%%%%%%%%%%%%%%%%%%%%%%%%%%%%%%%%%%%%%%%%%%%%%%%%%%%%%%%%%%
\subsection{Minimum Cut Method}
\label{subsec:mincut}
%%%%%%%%%%%%%%%%%%%%%%%%%%%%%%%%%%%%%%%%%%%%%%%%%%%%%%%%%%%%%%%%%%%%%%%%%%%%%%%%%%%%%
The minimum cut method uses a bipartition of the index set.
Let $G(C)=(V,A(C))$ be the strong majority graph of $C$, where each arc $(u,v)$ has weight $c_{uv}$.
For a bipartition $(V_{1},V_{2})$ of $V$, with $V_{1}\neq\emptyset$, $V_{2}\neq\emptyset$, $V_{1}\cap V_{2}=\emptyset$, and $V_{1}\cup V_{2}=V$, define
\begin{align}
  \label{equ:mincut-value}
  c(V_{1},V_{2})
  =
  \sum\{c_{uv}\mid (u,v)\in A(C),\ u\in V_{2},\ v\in V_{1}\}.
\end{align}
This quantity is the total weight of arcs directed from the second block to the first block.
If the two blocks are ordered as $(V_{1},V_{2})$, such arcs are directed backward with respect to the block ordering.

The minimum cut method chooses a bipartition $(V_{1},V_{2})$ that minimizes $c(V_{1},V_{2})$.
The intuition is that a good two-block ordering should have a small total backward weight.
After obtaining the bipartition, the method recursively solves the subproblems induced by $V_{1}$ and $V_{2}$ and concatenates the resulting permutations.

The pseudocode is shown in Algorithm~\ref{alg:mincut}.

\IncMargin{0.5em}
\begin{algorithm}[t]
  \caption{Minimum cut method.}
  \label{alg:mincut}
  \SetKwFunction{FMain}{MinCut}
  \SetKwProg{Fn}{Function}{}{end}

  \Fn{\FMain{$C,V$}}{
    \KwIn{Nonnegative matrix $C=(c_{uv})_{u,v\in V}$.}
    \KwOut{Permutation of $V$.}
    \If{$|V|\leq{\tt MAXSIZE}$}{
      \Return an optimal solution of {\rm LOP}$(C)$ obtained by an exact algorithm\;
    }
    \Else{
      $(V_{1},V_{2})\leftarrow$ a bipartition of $V$ minimizing $c(V_{1},V_{2})$\;
      $\sigma^{(1)} \leftarrow \FMain(C[V_1], V_1)$\;
      $\sigma^{(2)} \leftarrow \FMain(C[V_2], V_2)$\;
      \Return $(\sigma^{(1)},\sigma^{(2)})$\;
    }
  }
\end{algorithm}
\DecMargin{0.5em}

The minimum cut method is also heuristic.
Minimizing the backward weight between two blocks does not necessarily imply that concatenating optimal solutions of the two subproblems gives an optimal solution of the original problem.
%%%%%%%%%%%%%%%%%%%%%%%%%%%%%%%%%%%%%%%%%%%%%%%%%%%%%%%%%%%%%%%%%%%%%%%%%%%%%%%%%%%%%
\subsection{Recursive Borda Method}
\label{subsec:rborda}
%%%%%%%%%%%%%%%%%%%%%%%%%%%%%%%%%%%%%%%%%%%%%%%%%%%%%%%%%%%%%%%%%%%%%%%%%%%%%%%%%%%%%
The recursive Borda method also uses bipartitions, but its partitioning rule is based on Borda scores.
Recall that the Borda score of $u\in V$ is
\begin{align*}
  \beta(u)
  =
  \sum\{c_{uv}\mid v\in V\}
  -
  \sum\{c_{vu}\mid v\in V\}.
\end{align*}
A positive Borda score indicates that the index $u$ tends to dominate other indices in aggregate, while a negative score indicates the opposite.
Unlike the static Borda ordering and its sequential variant described in Section~\ref{subsec:constructive}, the recursive Borda method uses these scores to define bipartitions recursively. 
\par
For a bipartition $(V_{1},V_{2})$ of $V$, define
\begin{align}\label{equ:max2op}
  b(V_{1},V_{2})
 & = \sum\{c_{uv}\mid (u,v)\in A(C),\ u\in V_{1},\ v\in V_{2}\}\notag\\
 &\quad - \sum\{c_{uv}\mid (u,v)\in A(C),\ u\in V_{2},\ v\in V_{1}\}.
\end{align}
This value represents the net weight supporting the block order $(V_{1},V_{2})$.
The recursive Borda method chooses a bipartition that maximizes $b(V_{1},V_{2})$ and then applies the same procedure recursively to the two induced subproblems.

The following proposition gives a simple characterization of such a bipartition.

\begin{proposition}
  \label{prop:borda-cut}
  Let $(V_{1},V_{2})$ be a bipartition of $V$.
  Then
  \begin{align}
    \label{equ:max2op-f}
    b(V_{1},V_{2})
    =
    \sum_{u\in V_{1}}\beta(u).
  \end{align}
  Moreover,
  \begin{align}
    \label{equ:borda-sum-zero}
    \sum_{u\in V}\beta(u)=0.
  \end{align}
  Hence, if not all Borda scores are zero, then both sets
\begin{align*}
    \{u\in V\mid \beta(u)>0\}
    \quad \mbox{and} \quad
    \{u\in V\mid \beta(u)<0\}
\end{align*}
  are nonempty.
  A bipartition maximizing $b(V_{1},V_{2})$ is obtained by assigning all indices with positive Borda score to $V_{1}$ and all indices with negative Borda score to $V_{2}$.
  Indices with zero Borda score may be assigned to either block without changing the value of $b(V_{1},V_{2})$.
\end{proposition}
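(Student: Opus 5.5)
We prove the identity \eqref{equ:max2op-f} by expanding the Borda scores and cancelling the terms internal to $V_1$. The sum-zero identity \eqref{equ:borda-sum-zero} then follows either directly or as the degenerate case $V_1=V$. The remaining claims are elementary consequences of these two identities.

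\emph{Step 1 (the identity).} Since $c_{uv}=0$ whenever $(u,v)\notin A(C)$, the restriction to arcs of $A(C)$ in \eqref{equ:max2op} is immaterial, and we may sum over all pairs. Writing $V=V_1\cup V_2$, we have
\begin{align*}
  \sum_{u\in V_1}\beta(u)
  &=\sum_{u\in V_1}\sum_{v\in V}c_{uv}-\sum_{u\in V_1}\sum_{v\in V}c_{vu}\\
  &=\Bigl(\sum_{u\in V_1}\sum_{v\in V_1}c_{uv}+\sum_{u\in V_1}\sum_{v\in V_2}c_{uv}\Bigr)
   -\Bigl(\sum_{u\in V_1}\sum_{v\in V_1}c_{vu}+\sum_{u\in V_1}\sum_{v\in V_2}c_{vu}\Bigr).
\end{align*}
The two double sums over $V_1\times V_1$ coincide after renaming the indices $u\leftrightarrow v$, so they cancel. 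What remains is exactly the forward weight from $V_1$ to $V_2$ minus the backward weight from $V_2$ to $V_1$, which is $b(V_1,V_2)$. Diagonal entries $c_{uu}$ appear in both $V_1\times V_1$ sums and also cancel. Note that the definition of $\beta$ sums over all $v\in V$, including $v=u$, so no special care is needed there.

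\emph{Step 2 (the sum is zero).} We have $\sum_{u\in V}\beta(u)=\sum_{u,v}c_{uv}-\sum_{u,v}c_{vu}=0$ by the same index swap. If not all scores are zero, some $\beta(u)\neq 0$, and a sum of zero forces scores of both signs. Hence both sets $\{u\in V\mid \beta(u)>0\}$ and $\{u\in V\mid \beta(u)<0\}$ are nonempty.

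\emph{Step 3 (the maximizer).} By Step 1, $b(V_1,V_2)=\sum_{u\in V_1}\beta(u)\le\sum_{u:\beta(u)>0}\beta(u)$. Equality holds when $V_1$ contains every positive index and no negative one. By Step 2, the proposed $V_1$ and $V_2$ are both nonempty, so the proposed pair is a genuine bipartition attaining the bound. Zero-score indices do not affect the sum, so they can go to either block.

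The only point needing care is Step 3. Two things must be checked there: that the bound is attained by a partition with both blocks nonempty, which Step 2 supplies, and the degenerate case in which all scores are zero. In that case every bipartition gives $b=0$, so any bipartition is optimal, consistent with the statement's allowance for zero-score indices.
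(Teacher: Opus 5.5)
Your proof is correct and follows essentially the same route as the paper: expand $\sum_{u\in V_1}\beta(u)$, cancel the $V_1\times V_1$ terms, identify the remaining cross terms with $b(V_1,V_2)$, then derive the zero-sum identity by the index swap and read off the maximizer. Your additional checks are details the paper leaves implicit: the restriction to $A(C)$ being immaterial, the diagonal entries cancelling, both blocks being nonempty via Step~2, and the all-zero case.
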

\begin{proof}
Expanding $\sum_{u\in V_{1}}\beta(u)$, every term $c_{uv}$ with $u,v\in V_{1}$ appears once with a positive sign and once with a negative sign, and hence cancels out.
Terms with both endpoints in $V_{2}$ do not appear.
A term $c_{uv}$ with $u\in V_{1}$ and $v\in V_{2}$ appears once with a positive sign, while a term $c_{uv}$ with $u\in V_{2}$ and $v\in V_{1}$ appears once with a negative sign.
Therefore, the remaining terms are exactly those in Equation~(\ref{equ:max2op}), and Equation~(\ref{equ:max2op-f}) follows.

Furthermore,
\begin{align*}
  \sum_{u\in V}\beta(u)
  =
  \sum_{u\in V}\sum_{v\in V}c_{uv}
  -
  \sum_{u\in V}\sum_{v\in V}c_{vu}
  =
  0.
\end{align*}
Thus, unless all Borda scores are zero, there must be at least one positive Borda score and at least one negative Borda score.
The characterization of a maximizing bipartition follows immediately from Equation~(\ref{equ:max2op-f}).
\end{proof}

In the recursive Borda method, indices with positive Borda score are placed in the first block, and indices with negative Borda score are placed in the second block.
Indices with zero Borda score are assigned according to a prescribed tie-breaking rule.
In our implementation, zero-score indices are assigned one by one to the currently smaller block, so that the two blocks are kept as balanced as possible.
If all Borda scores are zero, this rule splits the index set into two nonempty subsets whose sizes differ by at most one.

The pseudocode of the recursive Borda method is shown in Algorithm~\ref{alg:rborda}.

\IncMargin{0.5em}
\begin{algorithm}[t]
  \caption{Recursive Borda method.}
  \label{alg:rborda}
  \SetKwFunction{FMain}{RBorda}
  \SetKwProg{Fn}{Function}{}{end}

  \Fn{\FMain{$C,V$}}{
    \KwIn{Nonnegative matrix $C=(c_{uv})_{u,v\in V}$.}
    \KwOut{Permutation of $V$.}
    \If{$|V|\leq{\tt MAXSIZE}$}{
      \Return an optimal solution of {\rm LOP}$(C)$ obtained by an exact algorithm\;
    }
    \Else{
      Compute the Borda score $\beta(u)$ for all $u\in V$\;
      $V_{1}\leftarrow\{u\in V\mid \beta(u)>0\}$\;
      $V_{2}\leftarrow\{u\in V\mid \beta(u)<0\}$\;
      Assign indices with $\beta(u)=0$ to $V_{1}$ or $V_{2}$ according to the prescribed tie-breaking rule\;
      $\sigma^{(1)} \leftarrow \FMain(C[V_1], V_1)$\;
      $\sigma^{(2)} \leftarrow \FMain(C[V_2], V_2)$\;
      \Return $(\sigma^{(1)},\sigma^{(2)})$\;
    }
  }
\end{algorithm}
\DecMargin{0.5em}
\par 
The recursive Borda method differs from the static Borda rule in that Borda scores are recomputed within each recursively generated subset and are used to define the subsequent recursive partition.
%%%%%%%%%%%%%%%%%%%%%%%%%%%%%%%%%%%%%%%%%%%%%%%%%%%%%%%%%%%%%%%%%%%%%%%%%%%%%%%%%%%%%%%
\subsection{Condorcet Properties}\label{subsec:condorcet}
%%%%%%%%%%%%%%%%%%%%%%%%%%%%%%%%%%%%%%%%%%%%%%%%%%%%%%%%%%%%%%%%%%%%%%%%%%%%%%%%%%%%%%%
We next establish the Condorcet-type properties of the proposed decomposition-based heuristics.
\par
Although we use the preprocessing convention in Equation~(\ref{equ:asym}) throughout this section, we state the following definition in its standard form.
An element $x\in V$ is called a {\it Condorcet winner} for a nonnegative matrix
$C=(c_{uv})_{u,v\in V}$ if $c_{xv}>c_{vx}$ for all $v\in V-{x}$.
Under the convention in Equation~(\ref{equ:asym}), this condition is equivalent to $c_{xv}>0$ for all $v\in V-{x}$.
A Condorcet winner is unique if it exists.
It is known that the optimal solutions of the linear ordering problem satisfy 
a property called the {\it Condorcet criterion}~\cite{YL78,Young88}. 
\begin{proposition}[Condorcet criterion]\label{pr:CC}
Let $C=(c_{uv})_{u,v\in V}$ be a nonnegative matrix. If $x\in V$ is the Condorcet winner for $C$, 
then for every optimal solution $\pi$ of $\LOP(C)$, we have $\pi(1) =x$.
\end{proposition}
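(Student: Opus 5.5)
We argue by contradiction using an insertion move. Let $\pi$ be an optimal solution of $\LOP(C)$, and suppose $\pi(1)\neq x$. Let $k\geq 2$ be the position of $x$, that is, $\pi(k)=x$. Consider the permutation $\pi'=\ins(\pi,k,1)$, obtained by removing $x$ from position $k$ and inserting it at the front, with $\pi(1),\dots,\pi(k-1)$ shifted one position to the right. We will show that $f(\pi')>f(\pi)$, which contradicts the optimality of $\pi$.

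The key step is to compute $f(\pi')-f(\pi)$ directly from Equation~(\ref{equ:lop}). The only pairs whose relative order changes are the pairs $\{x,\pi(i)\}$ with $1\leq i\leq k-1$. In $\pi$, the element $\pi(i)$ precedes $x$ and contributes $c_{\pi(i)x}$. In $\pi'$, the element $x$ precedes $\pi(i)$ and contributes $c_{x\pi(i)}$. All other ordered pairs keep their relative order, so their contributions are unchanged. Therefore
\begin{align*}
  f(\pi')-f(\pi)=\sum_{i=1}^{k-1}\bigl(c_{x\pi(i)}-c_{\pi(i)x}\bigr).
\end{align*}

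Since $x$ is the Condorcet winner, every summand is strictly positive. Because $k\geq 2$, the sum is nonempty, so $f(\pi')>f(\pi)$. This contradicts the optimality of $\pi$, and hence $\pi(1)=x$. Note that this argument uses the standard form of the definition, $c_{xv}>c_{vx}$, so it does not depend on the preprocessing convention in Equation~(\ref{equ:asym}).

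The only point requiring care is justifying that no other pair changes its relative order under the insertion move. This holds because the elements $\pi(1),\dots,\pi(k-1)$ are shifted together as a block and keep their mutual order, while the elements after position $k$ are untouched. Beyond this observation, I do not expect any real obstacle.
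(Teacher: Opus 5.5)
Your proof is correct. Under $\ins(\pi,k,1)$ the only pairs whose relative order changes are $\{x,\pi(i)\}$ for $i<k$, so your difference formula holds. Every summand is strictly positive by the Condorcet condition, and the sum is nonempty because $k\ge 2$.

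The paper gives no proof of Proposition~\ref{pr:CC}; it quotes the result as known from Young--Levenglick and Young. Your insertion argument therefore supplies the missing self-contained justification.

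The paper does contain another route. One reduces to the convention~(\ref{equ:asym}) and observes that $x$ then has arcs to all other indices and no incoming arcs in $G(C)$. Hence $\{x\}$ is its own strong component, and Proposition~\ref{th:SCC} forces $x$ before every other index.

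Compared with that route, your argument is more elementary and does not rely on the stronger cited result. It works directly with the unprocessed condition $c_{xv}>c_{vx}$. It also proves more. Since $\ins(\pi,k,1)\in N_I(\pi)$, every permutation that is merely locally optimal for the insertion neighborhood of Section~\ref{subsec:local-search} already has the Condorcet winner first, not only the global optima. In particular, the output of insertion local search started from any initial solution (e.g., Borda+LS in Section~\ref{sec:experiments}) satisfies the Condorcet criterion, even though the Borda rule alone does not.
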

\par 
The static and sequential Borda rules do not satisfy this criterion.
This can be seen from the following example. 
Let $V=\{x,a,b\}$, and let $C=(c_{uv})_{u,v\in V}$ be defined by
\begin{align*}
c_{xa}=c_{xb}=1, c_{ab}=100, c_{ax}=c_{bx}=c_{ba}=0.
\end{align*}
Then, $x$ is the Condorcet winner of $C$, but both the static and sequential Borda rules do not select $x$ 
as the first element. 

Ando, Sukegawa, and Takagi~\cite{AndoSukegawaTakagi} proposed a stronger property for the optimal solutions of 
the linear ordering problem, 
called the {\it strong Condorcet criterion}. 
We recall that for a nonnegative matrix $C=(c_{uv})_{u,v\in V}$, $\mathcal{P}(C)$ is the partition of $V$ corresponding to the strongly connected 
components of the strong majority graph $G(C)=(V,A(C))$ for $C$.
%%%%%%%%%%%%%%%%%%%%%%%%%%%%%%%%%%%%%%%%%%%%
\begin{proposition}[Ando, Sukegawa, and Takagi~\cite{AndoSukegawaTakagi}]\label{th:SCC}
Let $C=(c_{uv})_{u,v\in V}$ be a nonnegative matrix. 
Every optimal solution $\pi$ of $\LOP(C)$ satisfies the strong Condorcet criterion, 
that is, $\pi$ satisfies
\begin{align}\label{eq:SCC}
\forall Y,Z\in\mathcal{P}(C) \, \mbox{with $Y\not=Z$}, \,  \forall y\in Y, \, \forall z\in Z\colon \,  (y,z)\in A(C) \implies \pi^{-1}(y)<\pi^{-1}(z).
\end{align}
\end{proposition}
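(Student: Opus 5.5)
The plan is an exchange argument by contradiction. Suppose $\pi$ is optimal for $\LOP(C)$, and suppose there exist distinct components $Y,Z\in\mathcal{P}(C)$, $y\in Y$, $z\in Z$ with $(y,z)\in A(C)$ but $\pi^{-1}(z)<\pi^{-1}(y)$. We will construct a permutation with strictly larger objective value. First a reduction: replacing $C$ by $C'$ from Equation~(\ref{equ:c-prime}) preserves optimal solutions. We therefore want it to preserve $A(C)$ as well. Under the standard definition $A(C)$ should be read as $\{(u,v)\mid c_{uv}>c_{vu}\}$, and this set is exactly $A(C')$. So we may assume Equation~(\ref{equ:asym}) holds, and that $A(C)$ is the support of $C$.

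Since $Y\neq Z$ are strongly connected components, the condensation of $G(C)$ is acyclic. Fix a topological ordering $(Y_1,\dots,Y_l)$ of $\mathcal{P}(C)$ as in Equation~(\ref{equ:isou}). Define $\pi'$ by stably sorting $\pi$ by component index: elements of $Y_i$ come before elements of $Y_j$ whenever $i<j$, and elements of the same component keep their relative order from $\pi$.

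Now compare $f(\pi')$ and $f(\pi)$ pair by pair. For $u,v$ in the same component, the relative order is unchanged, so the contribution is unchanged. For $u\in Y_i$, $v\in Y_j$ with $i<j$, Equation~(\ref{equ:isou}) forces $c_{vu}=0$, since an arc from $Y_j$ to $Y_i$ would contradict the topological order. In $\pi'$ the pair contributes $c_{uv}\ge 0=c_{vu}$, which is at least what it contributed in $\pi$. Hence $f(\pi')\ge f(\pi)$. For the specific pair $(y,z)$, the arc $(y,z)$ means $y$'s component precedes $z$'s, so $\pi'$ places $y$ before $z$ and gains $c_{yz}>0$, whereas $\pi$ earned $c_{zy}=0$ on this pair. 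Thus $f(\pi')>f(\pi)$, contradicting optimality.

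The only delicate point is the bookkeeping of the reduction. Optimality must transfer between $C$ and $C'$, which holds because the objectives differ by a constant by Equation~(\ref{equ:lop-re}). And the graph must be the same: if the paper intends $A(C)$ literally as the support of the original $C$, without reduction, the statement can fail when $c_{uv}=c_{vu}>0$. I would therefore state explicitly that the convention in Equation~(\ref{equ:asym}) is in force. With that settled, the pairwise comparison is routine. Proposition~\ref{pr:CC} also follows as the special case in which $\{x\}$ is its own component with arcs to every other vertex.
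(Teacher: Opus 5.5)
Your proof is correct. Note that the paper gives no proof of this proposition; it imports the result from \cite{AndoSukegawaTakagi}, so there is no in-paper argument to match. The closest result in the paper is Theorem~\ref{theo:st_bunkai}, which only says that \emph{some} optimal solution is a concatenation along a topological ordering of $\mathcal{P}(C)$. That alone cannot yield a statement about \emph{every} optimal solution. The strict gain $c_{yz}>0$ on the violated pair is exactly what your stable-sort exchange adds. Conversely, your non-strict step also reproves Theorem~\ref{theo:st_bunkai}, once you observe that the cross-component contribution $\sum_{i<j}\sum_{u\in Y_i,\,v\in Y_j}c_{uv}$ is the same for every block concatenation. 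So one exchange handles both results.

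One correction to your closing caveat: the statement does not fail for the literal support of an unreduced $C$. If $c_{uv}=c_{vu}>0$, then both $(u,v)$ and $(v,u)$ are arcs, so $u$ and $v$ lie in the same strongly connected component and are excluded by $Y\neq Z$. In fact your proof never uses Equation~(\ref{equ:asym}). It needs only $c\geq 0$ and the fact that no arc runs from a later component to an earlier one, which holds for the condensation of any digraph.

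What the reduction to $C'$ buys is strength, not validity. Since $A(C')\subseteq A(C)$, the components of $G(C')$ refine those of $G(C)$, and the criterion covers more pairs. The reduction is also what makes a Condorcet winner a singleton component (without it, $c_{vx}>0$ is possible). Your derivation of Proposition~\ref{pr:CC} as a special case relies on that.
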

\begin{proposition}
\label{pr:condorcet-level}
The level graph method satisfies the strong Condorcet criterion.
\end{proposition}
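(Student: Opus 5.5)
The plan is to show that the permutation $\sigma$ returned by \texttt{Level}$(C,V)$ satisfies condition~(\ref{eq:SCC}). Here $\mathcal{P}(C)$ and $A(C)$ refer to the strong majority graph of the original matrix $C$, after the preprocessing~(\ref{equ:asym}). That preprocessing does not change $A(C)$ up to deleting mutually cancelling arcs; this is the same convention under which Proposition~\ref{th:SCC} is stated.

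The first step handles the top level of the recursion, which is where the whole statement lives. Suppose $|V|>{\tt MAXSIZE}$. Then \texttt{Decompose}$(C,V,0)$ computes a topological ordering $(Z_1,\dots,Z_k)$ of the strongly connected components of $G_{\gamma_0}(C)=G(C)$. So $\{Z_1,\dots,Z_k\}=\mathcal{P}(C)$. The output is the concatenation $(\sigma^{(1)},\dots,\sigma^{(k)})$, where each $\sigma^{(i)}$ is some permutation of $Z_i$. Its internal structure does not matter.

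Now take $Y\neq Z$ in $\mathcal{P}(C)$, $y\in Y$, $z\in Z$, and $(y,z)\in A(C)$. Write $Y=Z_i$ and $Z=Z_j$. The topological ordering condition~(\ref{equ:isou}) gives $i<j$. Since blocks are concatenated in order, every element of $Z_i$ precedes every element of $Z_j$ in $\sigma$, so $\sigma^{-1}(y)<\sigma^{-1}(z)$. This proves~(\ref{eq:SCC}) in this case.

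The remaining case is $|V|\le{\tt MAXSIZE}$. Then the algorithm returns an optimal solution of $\LOP(C)$ computed by an exact algorithm. By Proposition~\ref{th:SCC}, every optimal solution satisfies the strong Condorcet criterion, so this case is immediate.

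The only point needing care is the case split: the decomposition at level $\gamma_0$ is executed only when $|V|>{\tt MAXSIZE}$. That is exactly why Proposition~\ref{th:SCC} is needed for the base case. A second check is that deeper recursion levels, at $\gamma_t$ with $t\ge1$, cannot destroy the property. They only permute elements within a single $Z_i$, and condition~(\ref{eq:SCC}) constrains only pairs lying in different components. Hence no inductive argument over the recursion depth is needed; one level of unfolding suffices. I expect no real obstacle beyond stating these two cases cleanly.
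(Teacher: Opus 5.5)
Your proof is correct and matches the paper's argument: the case $|V|\le{\tt MAXSIZE}$ follows from Proposition~\ref{th:SCC}, and otherwise the top-level topological ordering of the strongly connected components of $G_{\gamma_0}(C)=G(C)$ forces $y$ to precede $z$. Your side remark on preprocessing is slightly loose, since the preprocessing also removes the lighter arc of each unequal antiparallel pair and not only exactly cancelling pairs, but it plays no role in the argument.
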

\begin{proof}
Let $\pi\colon\{1,\ldots,n\}\to V$ be a permutation derived by the execution of the level graph method (Algorithm~\ref{alg:level}) and 
let $Y,Z\in\mathcal{P}(C)$, $Y\not=Z$, $y\in Y$, $z\in Z$, and $(y,z)\in A(C)$. 
If $|V|\leq {\tt MAXSIZE}$, then, by the definition of the algorithm, $\pi$ is an exact solution of $\LOP(C)$ so that~\eqref{eq:SCC} is 
satisfied by Proposition~\ref{th:SCC}. 
Otherwise, the algorithm decomposes the strong majority graph $G(C)=(V,A(C))=G_0(C)$ into strongly connected components $\mathcal{P}(C)$ and 
yields a topological ordering $(Y_1,\ldots, Y_l)$ of $\mathcal{P}(C)$. Since $(y,z)\in A(C)$, we have $Y=Y_i$ and $Z=Y_j$ for some $i<j$ by the definition of 
a topological ordering~\eqref{equ:isou}. It follows from the definition of the algorithm that $y$ precedes $z$ in $\pi$, i.e., $\pi^{-1}(y)<\pi^{-1}(z)$. 
\end{proof}
\begin{proposition}
\label{pr:condorcet-mincut}
The minimum cut method satisfies the strong Condorcet criterion.
\end{proposition}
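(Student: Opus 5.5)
The plan is induction on $|V|$. The inductive statement is: for every nonnegative matrix $C$ satisfying Equation~(\ref{equ:asym}), every permutation $\pi$ produced by Algorithm~\ref{alg:mincut} on $C$ satisfies~\eqref{eq:SCC} with respect to $\mathcal{P}(C)$. It is needed for all such matrices, because the recursion calls the algorithm on the submatrices $C[V_1]$ and $C[V_2]$, and those submatrices have their own partitions $\mathcal{P}(C[V_i])$.

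\emph{Base case.} If $|V|\leq{\tt MAXSIZE}$, then $\pi$ is an optimal solution of $\LOP(C)$. Hence~\eqref{eq:SCC} holds by Proposition~\ref{th:SCC}, exactly as in the proof of Proposition~\ref{pr:condorcet-level}.

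\emph{Inductive step.} Assume $|V|>{\tt MAXSIZE}$. I split into two cases according to whether $G(C)$ is strongly connected.
\begin{itemize}
\item If $G(C)$ is strongly connected, then $\mathcal{P}(C)=\{V\}$. There is no pair $Y\neq Z$, so~\eqref{eq:SCC} holds vacuously, whatever bipartition the algorithm chooses.
\item If $G(C)$ is not strongly connected, take a topological ordering $(Y_1,\dots,Y_l)$ with $l\geq 2$. The bipartition $(Y_1, V\setminus Y_1)$ has no backward arcs by~\eqref{equ:isou}, so $c(Y_1,V\setminus Y_1)=0$. Since $c\geq 0$, the bipartition $(V_1,V_2)$ chosen by the algorithm also satisfies $c(V_1,V_2)=0$. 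So there is no arc of $A(C)$ from $V_2$ to $V_1$.
\end{itemize}

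Now fix $Y\neq Z$ in $\mathcal{P}(C)$, $y\in Y$, $z\in Z$ with $(y,z)\in A(C)$, in the second case.
\begin{itemize}
\item If $y\in V_1$ and $z\in V_2$, then $y$ precedes $z$ in $\pi=(\sigma^{(1)},\sigma^{(2)})$.
\item The case $y\in V_2$, $z\in V_1$ is impossible, because $(y,z)$ would then be a backward arc.
\item If $y$ and $z$ lie in the same block $V_i$, I apply the induction hypothesis to $C[V_i]$, which is valid because $|V_i|<|V|$.
\end{itemize}
For the third case I need two facts. First, $(y,z)\in A(C[V_i])$, which holds because arcs of the induced subgraph are exactly the arcs of $A(C)$ between elements of $V_i$. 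Second, $y$ and $z$ lie in distinct strongly connected components of $G(C[V_i])$. This holds because any directed path in $G(C[V_i])$ is also a path in $G(C)$, so mutual reachability in the subgraph would force $Y=Z$. Thus the components of $G(C[V_i])$ refine the traces of $\mathcal{P}(C)$ on $V_i$. The induction hypothesis then gives that $y$ precedes $z$ in $\sigma^{(i)}$, and hence in $\pi$.

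The main point requiring care is the second case of the inductive step: the algorithm may return \emph{any} minimizing bipartition, not a structured one. The argument handles this by using only the value $c(V_1,V_2)=0$ and not the particular form of the minimizer. The refinement argument for induced subgraphs then lets the induction go through, even though the recursive calls are made with respect to different partitions $\mathcal{P}(C[V_i])$.
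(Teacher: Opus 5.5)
Your proof is correct and follows the same route as the paper: induction over the recursion, the base case via Proposition~\ref{th:SCC}, and a case split on whether $y$ and $z$ are separated by the chosen bipartition or lie in the same block. The paper simply asserts that a minimum cut is a union of strong components, which cannot hold literally when $G(C)$ is strongly connected, although the criterion is then vacuous. You supply the missing justification by treating the strongly connected case separately, showing the minimum cut value is zero otherwise, and using that the components of $G(C[V_i])$ refine those of $G(C)$.
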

\begin{proof}
We prove that for every subset $S\subseteq V$ {\tt MinCut}$(C[S],S)$ in Algorithm~\ref{alg:mincut} 
returns a permutation $\pi\colon\{1,\ldots,|S|\}\to S$ satisfying~\eqref{eq:SCC} by induction on $|S|$. 
If $|S|\leq{\tt MAXSIZE}$, then, by the definition of the algorithm, $\pi$ is an exact solution of $\LOP(C[S])$, so that we have~\eqref{eq:SCC} by Proposition~\ref{th:SCC}.  
\par 
Suppose $|S|>{\tt MAXSIZE}$ and assume that  {\tt MinCut}$(C[S'],S')$ returns a permutation $\sigma'$ 
that satisfies the strong Condorcet criterion for all proper subset $S'$ of $S$ with $x\in S'$. 
Let $Y,Z\in\mathcal{P}(C)$, $Y\not=Z$, $y\in Y$, $z\in Z$, and $(y,z)\in A(C)$. If  
$(S_1,S_2)$ is a minimum cut of $G(C[S])$, then there exists a partition $\{\mathcal{P}_1,\mathcal{P}_2\}$ of 
$\mathcal{P}(C)$ such that 
\begin{align}
S_1=\bigcup_{Y'\in\mathcal{P}_1} Y',\qquad S_2=\bigcup_{Z'\in\mathcal{P}_2}Z'.
\end{align} 
If $Y\in\mathcal{P}_1$ and $Z\in\mathcal{P}_2$, then we have $(y,z)\in S_1\times S_2$, and hence, 
$y$ precedes $z$ in $\pi$ by the definition of the algorithm. Otherwise, 
$Y$ and $Z$ are strong components of the strong majority graph $G(C[S_i])$ and $(y,z)\in A(C[S_i])$ 
for some $i=1,2$. It follows from the induction hypothesis that 
$y$ precedes $z$ in $\sigma^{(i)}$ for $i=1$ or $i=2$. Since $\pi=(\sigma^{(1)},\sigma^{(2)})$, 
$y$ precedes $z$ in $\pi$.  
\end{proof}
\par 
In contrast to the static Borda rule, the recursive Borda method satisfies the Condorcet criterion.
\begin{proposition}
\label{pr:condorcet-rborda}
The recursive Borda method satisfies the Condorcet criterion.
\end{proposition}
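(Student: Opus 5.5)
We prove the statement by induction on $|S|$ for the call {\tt RBorda}$(C[S],S)$. The claim is: if $x\in S$ is the Condorcet winner for $C[S]$, then the returned permutation $\pi$ of $S$ satisfies $\pi(1)=x$.

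\emph{Base case.} If $|S|\leq{\tt MAXSIZE}$, the algorithm returns an optimal solution of $\LOP(C[S])$. Proposition~\ref{pr:CC} then gives $\pi(1)=x$.

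\emph{Inductive step.} Suppose $|S|>{\tt MAXSIZE}$ and let $(S_1,S_2)$ be the bipartition chosen by the algorithm. We show that $x\in S_1$, and then we apply the induction hypothesis to $S_1$.

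\emph{The winner's score is positive.} Work under the convention~\eqref{equ:asym}, so $c_{xv}>0$ and hence $c_{vx}=0$ for all $v\in S\setminus\{x\}$. Then
\[
\beta(x)=\sum_{v\in S\setminus\{x\}}c_{xv}-\sum_{v\in S\setminus\{x\}}c_{vx}=\sum_{v\in S\setminus\{x\}}c_{xv}>0,
\]
where strict positivity uses $|S|\geq 2$, which holds since $|S|>{\tt MAXSIZE}\geq 1$. By the definition of Algorithm~\ref{alg:rborda}, an index with positive score is placed in $V_1$, so $x\in S_1$. This also shows $S_1\neq\emptyset$. Since not all scores are zero, Proposition~\ref{prop:borda-cut} shows that $S_2\neq\emptyset$ as well. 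Hence $S_1$ is a proper subset of $S$.

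\emph{The winner is preserved in the subproblem.} The property of being a Condorcet winner is inherited by $C[S_1]$. The condition $c_{xv}>c_{vx}$ holds for all $v\in S_1\setminus\{x\}\subseteq S\setminus\{x\}$. Also, the submatrix of a matrix satisfying~\eqref{equ:asym} still satisfies~\eqref{equ:asym}, so the convention remains valid in the recursive call.

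\emph{Applying the induction hypothesis.} Since $|S_1|<|S|$, the recursive call {\tt RBorda}$(C[S_1],S_1)$ returns $\sigma^{(1)}$ with $\sigma^{(1)}(1)=x$. Because $\pi=(\sigma^{(1)},\sigma^{(2)})$, we conclude $\pi(1)=x$.

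\emph{Main point of care.} The only step needing care is the strict positivity of $\beta(x)$, because a zero score would leave the assignment of $x$ to the tie-breaking rule. This is handled by the asymmetric convention together with $|S|\geq 2$.

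Without the preprocessing, the same conclusion holds. There $\beta(x)=\sum_{v}(c_{xv}-c_{vx})$, a sum of strictly positive terms. Moreover, Borda scores are invariant under the replacement $C\mapsto C'$, because $c_{uv}-c_{vu}=c'_{uv}-c'_{vu}$. So the argument does not depend on whether the preprocessing is applied.
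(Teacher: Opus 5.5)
Your proof is correct and follows essentially the same induction as the paper: the base case uses Proposition~\ref{pr:CC}, and in the inductive step $\beta(x)>0$ places $x$ in the first block, so the induction hypothesis applied to $S_1$ gives $\pi(1)=x$. You also make explicit two points the paper leaves implicit: that $|S|\geq 2$, so the score is strictly positive, and that $S_2\neq\emptyset$ by Proposition~\ref{prop:borda-cut}, so $S_1$ is a proper subset and the induction hypothesis actually applies.
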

\begin{proof}
Let $x\in V$ be the Condorcet winner for $C$. 
We prove that for every subset $S\subseteq V$ such that $x\in S$  {\tt RBorda}$(C[S],S)$ in Algorithm~\ref{alg:rborda} 
returns a permutation $\pi\colon\{1,\ldots,|S|\}\to S$ such that $\pi(1)=x$ by induction on $|S|$. 
\par
First, note that  for every subset $S\subseteq V$with $x\in S$, $x$ is the Condorcet winner for $C[S]$. 
If $|S|\leq{\tt MAXSIZE}$, then, by the definition of the algorithm, $\pi$ is an exact solution of $\LOP(C[S])$, so that we have $\pi(1)=x$ by Proposition~\ref{pr:CC}. 
Suppose $|S|>{\tt MAXSIZE}$ and assume that  {\tt RBorda}$(C[S'],S')$ returns a permutation $\sigma'$ 
such that $\sigma'(1)=x$ for all proper subset $S'$ of $S$ with $x\in S'$. 
If we denote by $\beta_S(u)$ the Borda score of $u$ with respect to the induced submatrix $C[S]$, then 
we have $\beta_S(x)>0$ since $x$ is the Condorcet winner for $C[S]$, and hence, $x\in V_1$. By the induction hypothesis, {\tt RBorda}$(C[S_1],S_1)$ returns 
a permutation $\sigma^{(1)}$ such that $\sigma^{(1)}(1)=x$. Since $\pi=(\sigma^{(1)},\sigma^{(2)})$, 
we have $\pi(1)=x$.  
\end{proof}
\par 
Proposition~\ref{pr:condorcet-rborda} cannot be strengthened to the strong Condorcet criterion. 
A counterexample is given below.  
Let 
$V=\{y_1,y_2,y_3,z_1,z_2,z_3\}$ and define $C=(c_{uv})$ by
\begin{align}\label{eq:example}
c_{uv}=\begin{cases}
          3 &\text{if $(u,v)=(y_1,y_2),(z_1,z_2)$},\\
          1 &\text{if $(u,v)=(y_2,y_3),(y_3,y_1),(z_2,z_3),(z_3,z_1),(y_2,z_1)$},\\
          0 &\text{otherwise}.
         \end{cases}
\end{align}
The strong components of $G(C)$ are $Y=\{y_1,y_2,y_3\}$ and $Z=\{z_1,z_2,z_3\}$, and we have 
$(y_2,z_1)\in A(C)$. However, since $\beta(y_2)=-1$ and $\beta(z_1)=1$, the recursive Borda method produces 
permutation $\pi$ such that $\pi^{-1}(z_1)<\pi^{-1}(y_2)$. 
%%%%%%%%%% 4th %%%%%%%%%%
%%%%%%%%%%%%%%%%%%%%%%%%%%%%%%%%%%%%%%%%%%%%%%%%%%%%%%%%%%%%%%%%%%%%%%%%%%%%%%%%%%%%%%%%%%%%%%%%%
\section{Computational Experiments}
\label{sec:experiments}
%%%%%%%%%%%%%%%%%%%%%%%%%%%%%%%%%%%%%%%%%%%%%%%%%%%%%%%%%%%%%%%%%%%%%%%%%%%%%%%%%%%%%%%%%%%%%%%%%
This section reports computational experiments on the constructive heuristics considered in this paper, including the decomposition-based heuristics proposed in Section~\ref{sec:heuristics}.
The purpose of the experiments is twofold.
First, we compare these constructive heuristics in terms of solution quality and computation time.
Second, we examine how the choice of initial solutions affects the final outcome when local search is applied.

All algorithms were implemented in Python.
Subproblems whose sizes are at most ${\tt MAXSIZE}$ were formulated as integer programming problems and solved exactly using the Gurobi optimizer 13.0.2.
For the minimum cut method, the global minimum cut computation was carried out using the Hao--Orlin algorithm~\cite{HaoOrlin1994} implemented in the LEMON C++ library~\cite{lemon}, which was accessed from Python via a custom pybind11 binding.
The experiments were conducted on a computer with Ubuntu 24.04.4 LTS 64-bit, an Intel Core i7-12700 CPU at 2.10 GHz, and 16.0 GB of memory. The source code used in the computational experiments is publicly available at
\url{https://github.com/kazando/lop-decomposition-heuristics}. 

The benchmark instances used in this section are taken from xLOLIB~\cite{xLOLIB}.
For xLOLIB instances with $n=150$ and $n=250$, best-known objective values were taken from Garc{\'i}a, Ceberio, and Lozano~\cite{GarciaCeberioLozano2019}.
We use the values reported in the above reference as a common benchmark for reporting relative deviations, although these best-known values may have been improved in later studies.
%A smaller value of ${\rm rel.dev}$ indicates a better solution.
%%%%%%%%%%%%%%%%%%%%%%%%%%%%%%%%%%%%%%%%%%%%%%%%%%%%%%%%%%%%%%%%%%%%%%%%%%%%%%%%%%%%%%%
\subsection{Preliminary Study on the Parameter ${\tt MAXSIZE}$}
\label{subsec:maxsize}
We first examine the influence of the parameter ${\tt MAXSIZE}$ on the three decomposition-based constructive heuristics, namely the level graph method, the minimum cut method, and the recursive Borda method.
The test instances are the 39 xLOLIB instances with $n=150$.
For each method and each value of ${\tt MAXSIZE}$, we compute the average relative deviation and the average computation time over the 39 instances. 
The relative deviation is defined as
\begin{equation}
  \label{equ:rel-dev}
  {\rm rel.dev}
  =
  \frac{f_{\rm best}-f}{f_{\rm best}}\times 100,
\end{equation}
where $f$ is the objective value of the solution and $f_{\rm best}$ is the corresponding best-known objective value.

Figure~\ref{fig:maxsize-dev} shows the average relative deviation versus ${\tt MAXSIZE}$.
For all three methods, the solution quality improves as ${\tt MAXSIZE}$ increases.
Among the three methods, the recursive Borda method gives the smallest relative deviation over the tested range.

Figure~\ref{fig:maxsize-time} shows the average computation time versus ${\tt MAXSIZE}$ on a logarithmic scale.
The computation times of the level graph method and the minimum cut method increase moderately up to ${\tt MAXSIZE}=40$, but become substantially larger for ${\tt MAXSIZE}=50$ and ${\tt MAXSIZE}=60$.
The recursive Borda method remains much faster than the other two methods over the tested range, although its computation time also increases around ${\tt MAXSIZE}=40$.

Based on these observations, we fix ${\tt MAXSIZE}=40$ in the subsequent experiments.
This value is chosen as a common compromise point: it substantially improves the solution quality, especially for the recursive Borda method, while avoiding the larger computation times observed for the level graph and minimum cut methods at larger values of ${\tt MAXSIZE}$.
Although the best choice of ${\tt MAXSIZE}$ may depend on both the method and the instance size, we use the same value ${\tt MAXSIZE}=40$ for all methods in order to ensure a uniform and fair comparison.
\begin{figure}[htbp]
  \centering
  \includegraphics[width=0.8\linewidth]{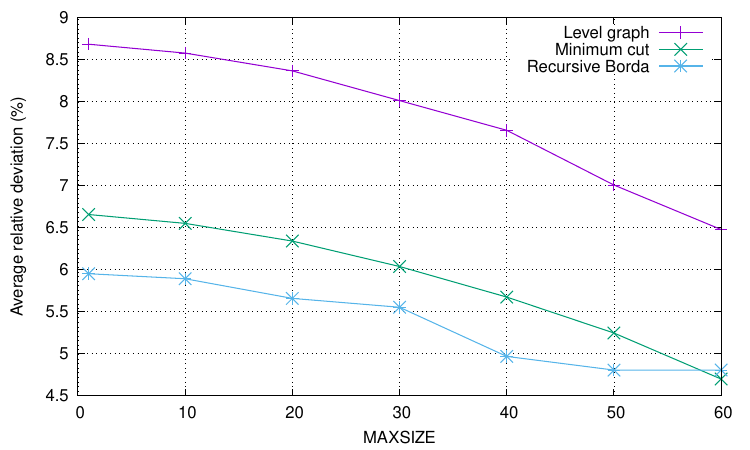}
  \caption{Average relative deviation versus ${\tt MAXSIZE}$ on xLOLIB instances with $n=150$.}
  \label{fig:maxsize-dev}
\end{figure}

\begin{figure}[htbp]
  \centering
  \includegraphics[width=0.8\linewidth]{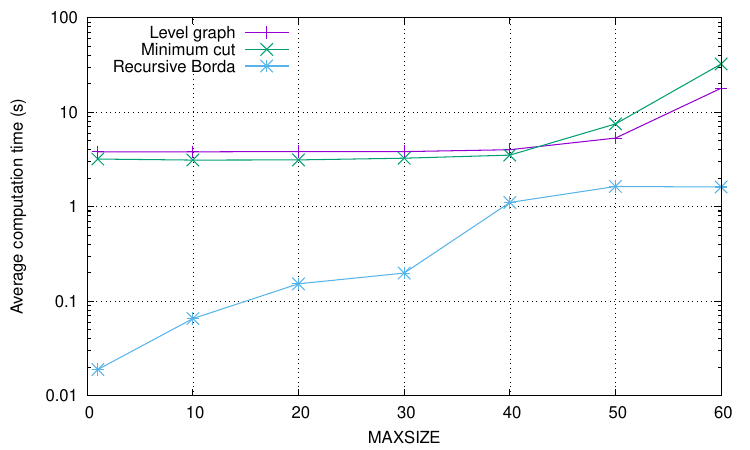}
  \caption{Average computation time versus ${\tt MAXSIZE}$ on xLOLIB instances with $n=150$ (log scale).}
  \label{fig:maxsize-time}
\end{figure}
%%%%%%%%%%%%%%%%%%%%%%%%%%%%%%%%%%%%%%%%%%%%%%%%%%%%%%%%%%%%%%%%%%%%%%%%%%%%%%%%%%%%%%%
\subsection{Comparison of Constructive Heuristics}
\label{subsec:comp_constructive}
We next compare the performance of five constructive heuristics: Becker's algorithm, the Borda rule, the level graph method, the minimum cut method, and the recursive Borda method.
The results for xLOLIB instances with $n=150$ and $n=250$ are summarized in Table~\ref{tab:constructive}.
All values are averages over 39 instances.

\begin{table}[tb]
\caption{Comparison of constructive heuristics on xLOLIB instances: average relative deviation, average rank, and average total time.}
\label{tab:constructive}
\centering
\begin{tabular}{llrrr}
\hline
Size & Method
& Avg.\ rel.dev.\ (\%)
& Avg.\ rank
& Avg.\ time (s)
\\
\hline
\multirow{5}{*}{$n=150$}
& Becker          & 7.710  & 3.513 & 0.003 \\
& Borda           & 17.358 & 5.000 & \textbf{0.001} \\
& Level graph     & 7.659  & 3.385 & 3.943 \\
& Minimum cut     & 5.674  & 1.718 & 3.493 \\
& Recursive Borda & \textbf{4.967} & \textbf{1.385} & 1.102 \\
\hline
\multirow{5}{*}{$n=250$}
& Becker          & 7.749  & 2.769 & 0.008 \\
& Borda           & 16.396 & 5.000 & \textbf{0.002} \\
& Level graph     & 9.228  & 3.897 & 15.558 \\
& Minimum cut     & 7.132  & 2.282 & 18.014 \\
& Recursive Borda & \textbf{5.306} & \textbf{1.051} & 1.093 \\
\hline
\end{tabular}
\end{table}

Table~\ref{tab:constructive} shows that the recursive Borda method gives the best average solution quality for both $n=150$ and $n=250$.
It also achieves the best average rank among the five methods for both instance sizes.
The minimum cut method is the second-best constructive heuristic in terms of both average relative deviation and average rank, but it is consistently slower than the recursive Borda method.
The level graph method performs similarly to Becker's algorithm for $n=150$, but becomes less competitive for $n=250$.
The Borda rule is the fastest among the five methods, but its solution quality is substantially worse than those of the other methods.

These results indicate that, as a standalone constructive heuristic, the recursive Borda method provides a favorable balance between solution quality and computation time among the methods considered in this paper.
%%%%%%%%%%%%%%%%%%%%%%%%%%%%%%%%%%%%%%%%%%%%%%%%%%%%%%%%%%%%%%%%%%%%%%%%%%%%%%%%%%%%%%%
We next examine the effect of different initial solutions when local search is applied.
The local search used in our experiments is based on the insertion neighborhood described in Section~\ref{subsec:local-search}.

We first consider xLOLIB instances with $n=150$.
For these instances, best-known objective values are available from Garc{\'i}a, Ceberio, and Lozano~\cite{GarciaCeberioLozano2019}, and we report average relative deviations from those values.
The results are shown in Table~\ref{tab:ls150}.

\begin{table}[htbp]
\centering
\caption{Local search on xLOLIB ($n=150$): average relative deviation and average total time over 39 instances.}
\label{tab:ls150}
\begin{tabular}{lrrr}
\hline
Method & Avg. rel.dev. (\%) & Std. dev. & Avg. total time (s) \\
\hline
Becker+LS  & 1.911 & 0.419 & \textbf{0.166} \\
Borda+LS   & \textbf{1.882} & 0.380 & 0.215 \\
rBorda+LS  & 2.071 & 0.307 & 1.209 \\
\hline
\end{tabular}
\end{table}

Table~\ref{tab:ls150} shows that, once local search is applied, the differences among the initial solutions become much smaller than in the constructive-only experiments.
In fact, Borda+LS gives the best average relative deviation on these instances, followed closely by Becker+LS, while rBorda+LS is slightly worse on average.
This indicates that, for $n=150$, the local improvement phase is able to absorb most of the advantage of a better constructive heuristic.
%In other words, although the recursive Borda method provides the best constructive solutions, this advantage does not %directly translate into better final local optima once local search is applied.

We also test larger xLOLIB instances with $n=750$ and $n=1000$.
For these instances, we do not use external best-known values.
For each instance $i$ and each method, let $f_i^{\text{method}}$ denote
the final objective value obtained by applying local search
to the initial solution generated by that method.
For brevity, we write rBorda for the recursive Borda method.
We then define
\[
f_i^{\max}
=
\max\{f_i^{\text{Becker}}, f_i^{\text{Borda}}, f_i^{\text{rBorda}}\},
\]
and measure the normalized gap of each method by
\[
\frac{f_i^{\max} - f_i^{\text{method}}}{f_i^{\max}} \times 100.
\]
We also report the average rank and the average total computation time.
The results are shown in Table~\ref{tab:ls_large}.

\begin{table}[htbp]
\centering
\caption{Local search on larger xLOLIB instances: average normalized gap to the best of the tested methods, average rank, and average total time.}
\label{tab:ls_large}
\begin{tabular}{llrrr}
\hline
Size & Method & Avg. gap (\%) & Avg. rank & Avg. total time (s) \\
\hline
\multirow{3}{*}{$n=750$}
& Becker + LS & 0.213 & 2.60 & \textbf{8.402} \\
& Borda+LS    & \textbf{0.047} & \textbf{1.52} & 10.207 \\
& rBorda+LS   & 0.108 & 1.88 & 9.616 \\
\hline
\multirow{3}{*}{$n=1000$}
& Becker + LS & 0.259 & 2.68 & \textbf{19.926} \\
& Borda+LS    & \textbf{0.064} & \textbf{1.62} & 24.665 \\
& rBorda+LS   & 0.075 & 1.70 & 21.632 \\
\hline
\end{tabular}
\end{table}

Table~\ref{tab:ls_large} shows a similar tendency on larger instances.
For both $n=750$ and $n=1000$, Becker+LS is the fastest method, whereas Borda+LS gives the best final solutions on average in terms of both average normalized gap and average rank.
The recursive Borda initialization also remains competitive, especially for $n=1000$, where its average normalized gap is close to that of Borda+LS, although it does not improve upon Borda+LS after local search.

Taken together, these results suggest that the advantage of a constructive heuristic can change substantially after local search is applied.
The recursive Borda method is effective as a standalone constructive heuristic, but its advantage over simpler score-based initial solutions becomes much less pronounced when followed by insertion-based local search.
This indicates that the local search procedure substantially reduces the differences among the tested initial-solution generators.
%%%%%%%%%%%%%%%%%%%%%%%%%%%%%%%%%%%%%%%%%%%%%%%%%%%%%%%%%%%%%%%%%%%%%%%%%%%%%%%%%
\section{Concluding Remarks}\label{sec:conclusion}
%%%%%%%%%%%%%%%%%%%%%%%%%%%%%%%%%%%%%%%%%%%%%%%%%%%%%%%%%%%%%%%%%%%%%%%%%%%%%%%%%
In this paper, we studied decomposition-based constructive heuristics for the linear ordering problem.
The proposed methods follow a common recursive framework: the index set is heuristically partitioned into smaller subsets, subproblems whose sizes are at most a prescribed threshold are solved exactly, and the resulting permutations are concatenated.
Within this framework, we considered three partitioning rules: the level graph method, the minimum cut method, and the recursive Borda method.

We also established Condorcet-type properties of the proposed methods.
The recursive Borda method satisfies the Condorcet criterion, whereas the static and sequential Borda rules do not.
The level graph method and the minimum cut method satisfy a stronger Condorcet-type property, which the recursive Borda method does not satisfy.
Thus, although the proposed methods are heuristic, they are consistent with Condorcet-type ordering properties satisfied by optimal solutions of the LOP.

Computational experiments on xLOLIB instances showed that, among the five constructive heuristics considered in this paper---Becker's algorithm, the Borda rule, the level graph method, the minimum cut method, and the recursive Borda method---the recursive Borda method provided a favorable balance between solution quality and computation time.
In particular, it outperformed Becker's algorithm, the Borda rule, and the level graph method in terms of solution quality, while also giving better solutions than the minimum cut method with substantially smaller computation time.
These results indicate that the recursive Borda method is the most promising constructive principle among those considered in this study.

We also investigated the effect of applying local search to initial solutions generated by different constructive heuristics.
The results showed that, once local search is applied, the differences among initial solutions become much smaller than in the constructive-only experiments.
Thus, the main advantage of the recursive Borda method lies in its effectiveness as a standalone constructive heuristic and as a generator of high-quality initial solutions. 
High-quality constructive heuristics remain important in practice when only limited local improvement can be afforded or when a good solution is needed quickly.

There remain several directions for future research.
First, it would be worthwhile to investigate other recursive partitioning rules within the same framework.
Second, it would be interesting to derive approximation guarantees for the proposed heuristics, either for general instances or for particular classes of instances.
Third, it would be worthwhile to investigate whether the recursive decomposition framework developed in this paper can be applied to other permutation-based combinatorial optimization problems.
%%%%%%%%%%%%%%%%%%%%%%%%%%%%%%%%%%%%%%%%%%%%%%%%%%%%%%%%%%%%%%%
\section*{Acknowledgments}
%%%%%%%%%%%%%%%%%%%%%%%%%%%%%%%%%%%%%%%%%%%%%%%%%%%%%%%%%%%%%%%%
K.~Ando's work was supported by JSPS KAKENHI Grant Number 18K11180. N.~Sukegawa's work was supported by JSPS KAKENHI Grant Number 25K08185. 
%%%%%%%%%%%%%%%%%%%%%%%%%%%%%%%%%%%%%%%%%%%%%%%%%%%%%%%%%%%%%%%%%%%%%%%%%%%%%%%%%

%------ corresponding author ------------------ 
\vspace{15pt}
{\leftskip 8.5cm \parindent 0mm
Kazutoshi Ando\\
Department of Mathematical and Systems Engineering\\
Faculty of Engineering, 
Shizuoka University\\ 
Hamamatsu 432-8561, Japan\\
E-mail: \texttt{ando.kazutoshi@shizuoka.ac.jp}
\par}
%------------------------------------------------------ 
\end{document}